\title{Dualities for absolute zeta functions and multiple gamma functions}
\author{Nobushige Kurokawa}
\address{Nobushige Kurokawara\\ Tokyo Institute of Technology}
\email{kurokawa@math.titech.ac.jp}
\author{Hiroyuki Ochiai}
\address{Hiroyuki Ochiai\\ Kyushu University}
\email{ochiai@math.kyushu-u.ac.jp}
\begin{document}

\subjclass[2000]{Primary {11M06}}
\keywords{absolute zeta function, {multiple gamma function}, {multiple sine function}}

\newcommand{\ff}{\mathbb{F}}
\newcommand{\fone}{{\mathbb{F}_1}}
\newcommand{\fp}{{\mathbb{F}_p}}
\newcommand{\C}{\mathbb{C}}
\newcommand{\R}{\mathbb{R}}
\newcommand{\Z}{\mathbb{Z}}

\newcommand{\Real}{\operatorname{Re}}
\newcommand{\Spec}{\operatorname{Spec}}
\newcommand{\rank}{\operatorname{rank}}

\newcommand{\Gm}{\mathbb{G}_{\rm m}}
\newcommand{\Gmr}{\Gm^{\otimes r}}
\newcommand{\uo}{\underline{\omega}}
\newcommand{\SLr}{\mathbb{SL}(r)}
\newcommand{\GLr}{\mathbb{GL}(r)}

\theoremstyle{plain}
\newtheorem*{ThmA}{Theorem A}
\newtheorem*{ThmB}{Theorem B}
\newtheorem*{ThmC}{Theorem C}
\newtheorem{Thm}{Theorem}
\newtheorem*{Lem}
{Lemma}
\theoremstyle{definition}
\newtheorem{Dfn}{Definition}
\newtheorem{Ex}{Example}

\maketitle
\begin{abstract}
We study absolute zeta functions from the
view point of a canonical normalization.
We introduce the absolute Hurwitz zeta function for the 
normalization.  In particular, we show that the theory of
multiple gamma and sine functions
gives good normalizations in cases related to
the Kurokawa tensor product.  In these cases,
the functional equation of the absolute zeta function turns out to be
equivalent to the simplicity of the associated non-classical
multiple sine function of negative degree.
\end{abstract}

\section{Introduction}
The absolute zeta function of a scheme $X$ over
$\fone$ was first studied by Soul\'e \cite{Soule}
as a ``limit of $p\to 1$'' of the (congruence) zeta function
over $\fp$:
see Kurokawa \cite{K2} and Deitmar \cite{D} also.
Then, Connes and Consani \cite{CC1} \cite{CC2}
investigated the absolute zeta function as the following integral
\begin{equation*}
\zeta_X(s) = \exp\left( 
\mbox{$\displaystyle\int_1^\infty \frac{N_X(u)}{u^{s+1} \log u} du$} \right),
\end{equation*}
where
\begin{equation*}
N_X(u) = \left| X(\ff_{1^{u-1}}) \right|
\end{equation*}
is a suitably interpolated ``counting function.''
Here we must pay attention to the needed normalization for the integral near $u=1$:
see \cite{CC1} \cite{CC2} for a discussion.
In \cite[Theorem~4.13]{CC1} \cite[Theorem~4.3]{CC2} 
Connes and Consani calculated $\zeta_X(s)$ for Noetherian schemes
via the Kurokawa tensor product of \cite{K1}.

Our purpose is to introduce the absolute Hurwitz zeta function
\begin{equation*}
Z_X(w;s) = \frac{1}{\Gamma(w)} \int_1^\infty
\frac{N_X(u)}{u^{s+1} (\log u)^{1-w}} du
\end{equation*}
to get the canonical normalization:
\begin{equation*}
\zeta_X(s) =\exp \left(
\left.\frac{\partial}{\partial w} Z_X(w;s)\right|_{w=0}
\right).
\end{equation*}

This normalization is essentially due to Riemann (1859)
and it is used in the theory of multiple gamma and sine function as follows.

For each integer $r \ge 1$,
the $r$-ple Hurwitz zeta function $\zeta_r(w;x)$ is defined in 
$\Real(w) >r$ as
\begin{equation}
\zeta_r(w;x) = \sum_{n=0}^\infty {}_r H_n (n+x)^{-w}
\end{equation}
where $_r H_n ={{n+r-1}\choose{n}}$.

The analytic continuation of $\zeta_r(w;x)$ to all $w \in \C$
is obtained via the integral representation of Riemann
\begin{align*}
\zeta_r(w;x) 
&= \frac{1}{\Gamma(w)} \int_0^\infty (1-e^{-t})^{-r} e^{-xt} t^{w-1} dt \\
&= \frac{1}{\Gamma(w)} \int_1^\infty (1-u^{-1})^{-r} u^{x-1} (\log u)^{w-1} du
\end{align*}
by treating the integral around $u=1$ in the usual way.

Thus, by using such analytic continuation we get the $r$-ple gamma function
\begin{equation*}
\Gamma_r(x) =\exp\left(
\left.\frac{\partial}{\partial w} \zeta_r(w;x)\right|_{w=0}
\right)
\end{equation*}
and the $r$-ple sine function
\begin{equation*}
S_r(x) = \Gamma_r(x)^{-1} \Gamma_r(r-x)^{(-1)^r}.
\end{equation*}
We refer to Barnes \cite{B} (1904)
and Kurokawa-Koyama \cite{KK} (2003) for details,
where more general multiple gamma functions and multiple
sine functions were treated respectively.

We report three results in this introduction.
First, for a function $N:(1,\infty) \rightarrow \C$
we use
\begin{equation*}
Z_N(w;s) = \frac{1}{\Gamma(w)} 
\int_1^\infty N(u) u^{-s-1} (\log u)^{w-1} du
\end{equation*}
and
\begin{equation*}
\zeta_N(s) =\exp\left(
\left.\frac{\partial}{\partial w} Z_N(w;s)\right|_{w=0}
\right)
\end{equation*}
also.

\begin{ThmA}
Let $N(u) = \displaystyle\sum_{\alpha} m(\alpha) u^\alpha$ be a finite sum.
Then:
\begin{itemize}
\item[(1)] $Z_N(w;s) = \displaystyle \sum_{\alpha} m(\alpha) (s-\alpha)^{-w}$.
\item[(2)] $\zeta_N(s) = \displaystyle \prod_{\alpha} (s-\alpha)^{-m(\alpha)}$.
\end{itemize}
\end{ThmA}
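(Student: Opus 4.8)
The plan is to reduce both parts to the classical Gamma integral: part (1) will follow from a single change of variables, and part (2) will then follow by differentiating the resulting closed form at $w=0$. For (1), since $N$ is a finite sum I may use linearity and treat each exponent separately, so it suffices to evaluate $\frac{1}{\Gamma(w)}\int_1^\infty u^{\alpha-s-1}(\log u)^{w-1}\,du$ for a single $\alpha$. The substitution $u=e^t$ sends this to
\begin{equation*}
\frac{1}{\Gamma(w)}\int_0^\infty e^{-(s-\alpha)t}\,t^{w-1}\,dt=(s-\alpha)^{-w},
\end{equation*}
the last equality being the defining integral of the Gamma function, and summing over $\alpha$ gives the asserted formula. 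Here I would be careful about convergence: the factor $t^{w-1}$ is integrable near $t=0$ (that is, near $u=1$) exactly when $\Real(w)>0$, while convergence at $t=\infty$ requires $\Real(s-\alpha)>0$; because the sum is finite, both conditions hold simultaneously as soon as $\Real(w)>0$ and $\Real(s)>\max_\alpha\Real(\alpha)$.

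Once (1) is established in this region, I would observe that its right-hand side $\sum_\alpha m(\alpha)(s-\alpha)^{-w}$ is entire in $w$ for each fixed $s$ avoiding the points $\alpha$, and hence furnishes the analytic continuation of $Z_N(w;s)$ to all $w\in\C$; in particular it is holomorphic at $w=0$. To obtain (2) I then differentiate term by term: from $(s-\alpha)^{-w}=\exp(-w\log(s-\alpha))$ one gets $\partial_w(s-\alpha)^{-w}\big|_{w=0}=-\log(s-\alpha)$, so that $\partial_w Z_N(w;s)\big|_{w=0}=-\sum_\alpha m(\alpha)\log(s-\alpha)$, and exponentiating converts this sum into the product $\prod_\alpha(s-\alpha)^{-m(\alpha)}$.

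The analysis is entirely elementary, so I do not anticipate a genuine obstacle; the one point needing care is the choice of branch. The integral representation singles out the principal branch of $(s-\alpha)^{-w}$, and correspondingly of $\log(s-\alpha)$, on the half-plane $\Real(s)>\max_\alpha\Real(\alpha)$, and I would make sure that the substitution, the convergence conditions, and this branch choice are stated consistently so that the value of the derivative at $w=0$ is unambiguous.
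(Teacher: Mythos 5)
Your proposal is correct and follows essentially the same route as the paper: reduction by linearity to a single monomial $u^\alpha$, the substitution $u=e^t$ turning the integral into the Gamma integral $\frac{1}{\Gamma(w)}\int_0^\infty e^{-(s-\alpha)t}t^{w-1}\,dt=(s-\alpha)^{-w}$, and then differentiation at $w=0$ followed by exponentiation. The only difference is that you make the convergence region and branch choice explicit, which the paper leaves implicit.
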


This result is applicable to calculate many examples
(see \cite{K2}) of absolute zeta functions under our canonical normalization.
We note two simple examples.
\begin{Ex}
Let $X=\Spec \fone$.
Then
\begin{align*}
&N_X(u) =1,\\
&Z_X(w;s) = s^{-w},\\
& \zeta_X(s)=1/s.
\end{align*}
\end{Ex}
\begin{Ex}
Let $X=\mathbb{SL}(2)$.
Then 
\begin{align*}
&N_X(u) = u^3-u,\\
&Z_X(w;s) = (s-3)^{-w}  (s-1)^{-w},\\
& \zeta_X(s) = (s-1)/(s-3).
\end{align*}
\end{Ex}

Now the following result shows a functoriality.
\begin{ThmB}
\begin{itemize}
\item[(1)]
For $N_1, N_2 : (1,\infty) \rightarrow \C$
let
\begin{equation*}
(N_1 \oplus N_2) (u) = N_1(u) + N_2(u).
\end{equation*}
Then
\begin{equation*}
Z_{N_1 \oplus N_2}(w;s) = Z_{N_1}(w;s) + Z_{N_2}(w;s)
\end{equation*}
and
\begin{equation*}
\zeta_{N_1\oplus N_2}(s) = \zeta_{N_1}(s) \zeta_{N_2}(s).
\end{equation*}
\item[(2)]
Let
\[
N_i(u) = \sum_{\alpha_i} m_i(\alpha_i) u^{\alpha_i}
\]
for $i=1,2$.
Suppose that both are finite sums.
Put
\[
(N_1\otimes N_2)(u) = N_1(u) N_2(u).
\]
Then
\begin{align*}
& Z_{N_1 \otimes N_2}(w;s) \\
& = \sum_{\alpha_1,,\alpha_2}
m_1(\alpha_1) m_2(\alpha_2) (s-(\alpha_1+\alpha_2))^{-w}
\end{align*}
and
\[
\zeta_{N_1\otimes N_2}(s) = \prod_{\alpha_1, \alpha_2}(s-(\alpha_1+\alpha_2))^{-m_1(\alpha_2) m_2(\alpha_2)}.
\]
\end{itemize}

\end{ThmB}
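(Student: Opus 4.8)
The plan is to derive both parts directly from the definitions, exploiting the linearity of the defining integral together with Theorem A and the multiplicativity of the exponential. Nothing beyond these ingredients should be required.

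For part (1), I would begin by observing that the assignment $N \mapsto Z_N(w;s)$ is manifestly linear, since
\begin{equation*}
Z_N(w;s) = \frac{1}{\Gamma(w)} \int_1^\infty N(u)\, u^{-s-1} (\log u)^{w-1}\, du
\end{equation*}
depends linearly on the integrand $N$. Substituting $N_1 + N_2$ for $N$ and splitting the integral gives $Z_{N_1 \oplus N_2}(w;s) = Z_{N_1}(w;s) + Z_{N_2}(w;s)$ immediately. For the zeta identity I would then differentiate this sum in $w$ and evaluate at $w=0$, using additivity of $\left.\partial_w\right|_{w=0}$:
\begin{equation*}
\left.\frac{\partial}{\partial w} Z_{N_1 \oplus N_2}(w;s)\right|_{w=0}
= \left.\frac{\partial}{\partial w} Z_{N_1}(w;s)\right|_{w=0}
+ \left.\frac{\partial}{\partial w} Z_{N_2}(w;s)\right|_{w=0}.
\end{equation*}
Applying $\exp$ converts this sum into the product $\zeta_{N_1}(s)\,\zeta_{N_2}(s)$, which is the asserted identity.

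For part (2), I would expand the product of the two finite sums,
\begin{equation*}
(N_1 \otimes N_2)(u) = N_1(u)\, N_2(u)
= \sum_{\alpha_1,\alpha_2} m_1(\alpha_1)\, m_2(\alpha_2)\, u^{\alpha_1+\alpha_2},
\end{equation*}
so that $N_1 \otimes N_2$ is again a finite sum of the type treated in Theorem A, now with exponents $\alpha_1+\alpha_2$ and coefficients $m_1(\alpha_1)\,m_2(\alpha_2)$. Applying Theorem A(1) to this function reads off the formula for $Z_{N_1 \otimes N_2}(w;s)$ as the stated double sum, and Theorem A(2) gives the double-product expression for $\zeta_{N_1 \otimes N_2}(s)$.

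I do not expect a genuine obstacle here; the argument is essentially a bookkeeping exercise once the product is expanded. The one point deserving a word of care is that distinct index pairs $(\alpha_1,\alpha_2)$ can produce the same sum $\alpha_1+\alpha_2$, so that the true exponent multiplicity of a value $\beta$ is $\sum_{\alpha_1+\alpha_2=\beta} m_1(\alpha_1)\,m_2(\alpha_2)$. This coincidence is harmless, however, because Theorem A is stated for an arbitrary finite sum and the double-sum (respectively double-product) indexing in the conclusion collects these equal exponents automatically; no separate grouping argument is needed.
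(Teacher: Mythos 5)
Your proposal is correct and follows essentially the same route as the paper: part (1) by linearity of the defining integral followed by exponentiation, and part (2) by expanding the product into a finite sum of monomials. The only cosmetic difference is that you invoke Theorem~A for the expanded sum where the paper recomputes the integral inline, but since Theorem~A is itself proved by exactly that monomial integral computation, the content is identical; your added remark about coinciding exponents $\alpha_1+\alpha_2$ is a sound (and correct) observation about why the double-indexed sum and product need no regrouping.
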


This tensor product is essentially the Kurokawa tensor product
originated in \cite{K1}
(see \cite{M}, \cite{CC1} and \cite{CC2})
when $\alpha_j$'s are real.
We remark that for general $N_j$'s
(``infinite sums'' or ``generalized functions'')
we must resolve various difficulties.

For the next result we notice that our construction of $\zeta_r(w;x)$, $\Gamma_r(x)$
and $S_r(x)$ is valid for negative $r$ also
(see the later explanation).

\begin{ThmC}
Let $r$ be a positive integer.
Then
\begin{itemize}
\item[(1)]
$\displaystyle Z_{\Gm^{\otimes r}}(w;s) = \zeta_{-r}(w;s-r)$.
\item[(2)]
$\displaystyle\zeta_{\Gmr}(s) = \Gamma_{-r}(s-r)$\\
$\displaystyle= \prod_{j=1}^r(s-j)^{(-1)^{r-j-1}{r \choose j}}$ \\
$\displaystyle= \left( (1-1/s)^{\otimes r} \right)^{-1}$,\\
where $\otimes r$ is the Kurokawa tensor product.
\item[(3)]
We have the functional equation
\begin{equation*}
\zeta_{\Gmr}(s) = \zeta_{\Gmr}(r-s)^{(-1)^r},
\end{equation*}
which is equivalent to $S_{-r}(x)=1$.
\end{itemize}
\end{ThmC}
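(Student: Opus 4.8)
The plan is to reduce all three parts to the explicit finite sum produced by the counting function, and then to exploit a reflection symmetry of the negative-degree Hurwitz zeta function. First I would record that $N_{\Gm}(u)=u-1$, so that the multiplicativity of Theorem~B\,(2), applied $r$ times, gives
\begin{equation*}
N_{\Gmr}(u)=(u-1)^r=\sum_{j=0}^r\binom{r}{j}(-1)^{r-j}u^j.
\end{equation*}
Applying Theorem~A\,(1) to this finite sum yields $Z_{\Gmr}(w;s)=\sum_{j=0}^r\binom{r}{j}(-1)^{r-j}(s-j)^{-w}$. Inserting the expansion $(1-e^{-t})^{r}=\sum_{k=0}^r\binom{r}{k}(-1)^k e^{-kt}$ into the integral defining $\zeta_{-r}(w;x)$ and using $\frac{1}{\Gamma(w)}\int_0^\infty e^{-(x+k)t}t^{w-1}\,dt=(x+k)^{-w}$ gives $\zeta_{-r}(w;x)=\sum_{k=0}^r\binom{r}{k}(-1)^k(x+k)^{-w}$; setting $x=s-r$ and re-indexing by $j=r-k$ reproduces the sum above, which proves (1). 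Differentiating at $w=0$ and exponentiating (Theorem~A\,(2)) gives the product formula of (2), the identification $\zeta_{\Gmr}(s)=\Gamma_{-r}(s-r)$ follows from $\Gamma_{-r}(x)=\exp(\partial_w\zeta_{-r}(w;x)|_{w=0})$, and the tensor form $((1-1/s)^{\otimes r})^{-1}$ comes from $\zeta_{\Gm}(s)=(1-1/s)^{-1}$ together with Theorem~B\,(2).

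For (3) the key is the reflection identity
\begin{equation*}
\zeta_{-r}(w;-r-x)=(-1)^r(-1)^{-w}\,\zeta_{-r}(w;x),
\end{equation*}
which I would verify by substituting $x\mapsto-r-x$ in the finite sum for $\zeta_{-r}$, writing $-r-x+k=-(x+(r-k))$, re-indexing by $l=r-k$, and factoring $(-1)^{-w}$ out of $(-(x+l))^{-w}$. The second ingredient is the vanishing
\begin{equation*}
\zeta_{-r}(0;x)=\sum_{k=0}^r\binom{r}{k}(-1)^k=(1-1)^r=0\qquad(r\ge 1).
\end{equation*}

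Granting these, I would differentiate the reflection identity at $w=0$. The only term involving the multivalued factor $(-1)^{-w}$ is proportional to $\zeta_{-r}(0;x)$, which vanishes, so $\log\Gamma_{-r}(-r-x)=(-1)^r\log\Gamma_{-r}(x)$. Feeding this into $S_{-r}(x)=\Gamma_{-r}(x)^{-1}\Gamma_{-r}(-r-x)^{(-1)^r}$ gives $\log S_{-r}(x)=-\log\Gamma_{-r}(x)+(-1)^{2r}\log\Gamma_{-r}(x)=0$, hence $S_{-r}(x)=1$. Finally, substituting $x=s-r$ and using $\zeta_{\Gmr}(s)=\Gamma_{-r}(s-r)$ from (2) turns the relation $\Gamma_{-r}(x)=\Gamma_{-r}(-r-x)^{(-1)^r}$ into the asserted functional equation $\zeta_{\Gmr}(s)=\zeta_{\Gmr}(r-s)^{(-1)^r}$; since this substitution is a reversible change of variable, the functional equation and $S_{-r}(x)=1$ are indeed equivalent. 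I expect the one genuinely delicate point to be the branch of $(-1)^{-w}$ in the reflection identity, but it is harmless precisely because it enters only through the coefficient $\zeta_{-r}(0;x)=0$, so no choice of branch affects the derivative at $w=0$.
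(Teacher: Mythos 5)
Your proposal is correct, and parts (1) and (2) run essentially parallel to the paper: there, (1) is obtained (Theorem~3(1)) by the one-line rewriting $(u-1)^r u^{-s-1}=(1-u^{-1})^r u^{-(s-r)-1}$ inside the integral, matching $Z_{\Gmr}(w;s)$ directly against the Riemann integral representation of $\zeta_{-r}(w;s-r)$, whereas you expand $(u-1)^r$ binomially and invoke Theorem~A; the two computations are interchangeable. (Your finite-sum form even exposes a misprint in the statement: the product in (2) must include the $j=0$ factor $s^{(-1)^{r-1}}$ --- for $r=1$ the printed $\prod_{j=1}^{1}$ gives $(s-1)^{-1}$, not $s/(s-1)=\left((1-1/s)^{\otimes 1}\right)^{-1}$ --- and both your derivation and the paper's own Theorem~1(1) produce the correct product over $j=0,\dots,r$.) The genuine divergence is in (3). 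The paper proves $S_{-r}(x)=1$ (Theorem~1(2)) \emph{after} differentiating, by reindexing the explicit product $\Gamma_{-r}(x)=\prod_{n=0}^{r}(x+n)^{(-1)^{n+1}\binom{r}{n}}$ under $n\mapsto r-n$ and absorbing the accumulated factors of $-1$ via $\sum_{n=0}^{r}(-1)^{n}\binom{r}{n}=0$; the equivalence asserted in (3) is then isolated as the unconditional identity $\zeta_{\Gmr}(s)\,\zeta_{\Gmr}(r-s)^{(-1)^{r+1}}=S_{-r}(s-r)^{-1}$ (Theorem~4), which is exactly your ``reversible substitution'' remark made explicit. You instead work \emph{before} differentiation, proving the reflection identity $\zeta_{-r}(w;-r-x)=(-1)^{r}(-1)^{-w}\zeta_{-r}(w;x)$ and killing the branch-dependent term in the $w$-derivative with the special value $\zeta_{-r}(0;x)=0$. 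That vanishing is literally the same alternating binomial sum the paper uses to cancel signs, so the combinatorial content of the two arguments is identical; but your packaging is the more conceptual one --- it is the standard mechanism by which zeta values at $w=0$ govern sign factors in functional equations (compare the paper's Theorem~2(1)), and it transfers verbatim to the multi-period functions $\zeta_{-r}(w;x,\uo)$ of Section~2, where again $\zeta_{-r}(0;x,\uo)=0$ --- while the paper's route is more elementary and avoids branch questions entirely by manipulating only real factors. One small caution: to factor $(-(x+l))^{-w}=(-1)^{-w}(x+l)^{-w}$ with a single uniform branch you should first take $x>0$ real, so that every $-(x+l)$ lies on the negative axis, and extend afterwards (both sides being rational in $x$ by Theorem~1(1)); as you observe, any residual ambiguity in $\log\Gamma_{-r}$ is a multiple of $2\pi i$ and disappears upon exponentiation, so the conclusion $S_{-r}(x)=1$, and with it the functional equation, is unaffected.
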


Our result would suggest that
\[
\zeta_{\Gmr}(s) = \Gamma_{-r}(s-r)
\]
holds for $r<0$ also with the functional equation $s \leftrightarrow -r-s$.
For example
\[
\zeta_{\Gm^{\otimes-1}}(s) 
= \Gamma_1(s+1)
= \frac{\Gamma(s+1)}{\sqrt{2\pi}}
\]
and the functional equation $s \leftrightarrow 1-s$
is the reflection formula of Euler:
\[
\Gamma_1(s+1) \Gamma(2-s)
= S_1(s+1)^{-1} = - \frac{1}{2 \sin(\pi s)}.
\]
We remark that Manin \cite[\S1.7]{M}
indicated an idea to consider the gamma function
as the zeta function of the ``dual infinite dimensional projective space over $\fone$.''

\section{Multiple gamma functions and multiple sine functions}
We recall the construction of the multiple Hurwitz zeta function:
\begin{align*}
\zeta_r(w;x)
&= \sum_{n=0}^\infty {n+r-1 \choose n} (n+x)^{-w} \\
&= \frac{1}{\Gamma(w)} \int_0^\infty (1-e^{-t})^{-r} e^{-xt} t^{w-1} dt \\
&= \frac{1}{\Gamma(w)} \int_1^\infty (1-u^{-1})^{-r} u^{-x-1} (\log u)^{w-1} du.
\end{align*}
This definition is valid for any $r \in \R$ with sufficiently large $\Real(x)$ and $\Real(w)$,
so we have the analytic continuation to all $w \in \C$ via the usual method. Thus, we get
\begin{equation*}
\Gamma_r(x) = \exp\left(\left. \frac{\partial}{\partial w} \zeta_r(w;x) \right\vert_{w=0} \right)
\end{equation*}
and 
\begin{equation*}
S_r(x) = \Gamma_r(x)^{-1} \Gamma_r(r-x)^{(-1)^r}
\end{equation*}
for any $r \in \R$
(or $r \in \Z$ at least 
without ambiguity of the meaning of $(-1)^r$).
For readers interested in the theory of $r<0$, 
we refer to \cite{KO}.

\begin{Thm}
Let $r$ be a negative integer.
Then
\begin{itemize}
\item[(1)]
$\displaystyle\Gamma_r(x) =
\prod_{n=0}^{-r} (x+n)^{(-1)^{n+1} {-r \choose n}}$. 

\item[(2)]
$S_r(x)=1$.
\end{itemize}
\end{Thm}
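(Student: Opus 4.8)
The plan is to reduce everything to a finite sum by exploiting that for $r<0$ the factor $(1-e^{-t})^{-r}$ becomes a genuine polynomial. Writing $r=-m$ with $m\ge 1$ a positive integer, I would start from the integral representation
\begin{equation*}
\zeta_{-m}(w;x) = \frac{1}{\Gamma(w)}\int_0^\infty (1-e^{-t})^{m} e^{-xt} t^{w-1}\,dt
\end{equation*}
and expand $(1-e^{-t})^m = \sum_{n=0}^m {m\choose n}(-1)^n e^{-nt}$ by the binomial theorem. Since the sum is finite I may interchange summation and integration, and applying the standard evaluation $\int_0^\infty e^{-(n+x)t}t^{w-1}\,dt = \Gamma(w)(n+x)^{-w}$ term by term yields the closed form
\begin{equation*}
\zeta_{-m}(w;x) = \sum_{n=0}^m (-1)^n {m\choose n}(n+x)^{-w}.
\end{equation*}
This is the crucial simplification: for negative integer $r$ the multiple Hurwitz zeta function is an elementary finite Dirichlet polynomial, with no analytic-continuation subtleties. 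Differentiating in $w$ by means of $\frac{\partial}{\partial w}(n+x)^{-w} = -\log(n+x)(n+x)^{-w}$ and evaluating at $w=0$ gives $\log\Gamma_{-m}(x) = -\sum_{n=0}^m(-1)^n{m\choose n}\log(n+x)$; exponentiating produces the product in part (1).

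For part (2) I would work directly from the product formula just obtained, which keeps every exponent an integer and so avoids any branch ambiguity. Writing out $\Gamma_{-m}(-m-x)$ and pulling a minus sign out of each factor $(-m-x+n)=-(x+(m-n))$, the reflection $n\mapsto k=m-n$ together with the symmetry ${m\choose n}={m\choose m-n}$ rewrites the product over the same arguments $x+k$. The power $(-1)^r=(-1)^{m}$ occurring in the definition of $S_r$ then flips the sign pattern of the exponents from $(-1)^{m-k+1}{m\choose k}$ back to $(-1)^{k+1}{m\choose k}$, which are exactly the exponents appearing in $\Gamma_{-m}(x)$. I therefore expect to obtain $\Gamma_{-m}(-m-x)^{(-1)^m}=\Gamma_{-m}(x)$ and hence $S_{-m}(x)=\Gamma_{-m}(x)^{-1}\Gamma_{-m}(x)=1$.

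The main obstacle is the bookkeeping of the signs produced when extracting $-(x+k)$ from each factor. Each extraction contributes $(-1)^{e}$ with $e=(-1)^{m-k+1}{m\choose k}$, and since $e\equiv {m\choose k}\pmod 2$ the total sign is $\prod_{k=0}^m(-1)^{{m\choose k}}=(-1)^{2^m}$. The whole argument rests on this exponent $2^m=\sum_{k=0}^m{m\choose k}$ being even, which holds precisely because $m\ge 1$; this is the same vanishing that, at the level of the zeta function, reads $\zeta_{-m}(0;x)=\sum_{k=0}^m(-1)^k{m\choose k}=(1-1)^m=0$ and which would kill the branch-sensitive $i\pi w$ contribution in a more direct reflection of $\zeta_{-m}(w;-m-x)$. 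I would isolate this parity fact as a short remark, after which the sign collapses to $+1$ and the two gamma factors match identically.
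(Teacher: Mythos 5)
Your proposal is correct and takes essentially the same route as the paper: both reduce $\zeta_{-m}(w;x)$ to the finite Dirichlet polynomial $\sum_{n=0}^m(-1)^n\binom{m}{n}(n+x)^{-w}$ (the paper via the identity $\binom{n+r-1}{n}=(-1)^n\binom{-r}{n}$ in the defining series, you via binomial expansion under the integral representation), then get (1) by differentiating at $w=0$ and (2) by the reflection $n\mapsto m-n$ with binomial symmetry in the resulting integer-exponent product. The only cosmetic difference is the sign bookkeeping when extracting $-(x+k)$ from each factor: the paper obtains total sign $+1$ from the vanishing alternating sum $\sum_{n=0}^{m}(-1)^n\binom{m}{n}=0$, whereas you use the parity fact $\sum_{k=0}^m\binom{m}{k}=2^m\equiv 0 \pmod 2$ for $m\ge 1$ --- both are valid.
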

\begin{proof}
We have
\begin{align*}
\zeta_r(w;x)
&= \sum_{n=0}^\infty {n+r-1 \choose n} (n+x)^{-w}\\
&= \sum_{n=0}^\infty (-1)^n {-r \choose n} (n+x)^{-w}.
\end{align*}
Hence
\begin{align*}
\Gamma_r(x) 
&=\exp\left(\sum_{n=0}^{-r} (-1)^{n+1} {-r \choose n} \log(n+x) \right)\\
&= \prod_{n=0}^{-r} (n+x)^{(-1)^{n+1} {-r \choose n}}.
\end{align*}
Next,
\begin{align*}
&S_r(x) 
= \Gamma_r(x)^{-1} \Gamma_r(r-x)^{(-1)^r}\\
&= \prod_{n=0}^{-r} (n+x)^{(-1)^n {-r \choose n}}
\times \prod_{n=0}^{-r} (n+r-x)^{(-1)^{n-r+1} {-r \choose n}} \\
&= \prod_{n=0}^{-r} (n+x)^{(-1)^n {-r \choose n}}\\
& \qquad \times \prod_{n=0}^{-r} ((-r-n)+x)^{(-1)^{(-r-n)+1} {-r \choose n}}, 
\end{align*}
where we used
\begin{equation*}
\sum_{n=0}^{-r} (-1)^n {-r \choose n} =0.
\end{equation*}
Hence
\begin{align*}
S_r(x) &= 
\prod_{n=0}^{-r} (n+x)^{(-1)^n {-r \choose n}}
\times \prod_{n=0}^{-r} (n+x)^{(-1)^{n+1} {-r \choose n}} \\
&=1.
\end{align*}
\end{proof}

This result can be generalized to the multi-period case
$\uo=(\omega_1,\ldots, \omega_r)$ with $\omega_1,\ldots, \omega_r>0$
as follows,
where the above case is contained as $\uo=(1,\ldots,1)$.
Put
\begin{align*}
&\zeta_{-r}(w;x,\uo) \\
&\qquad= 
\sum_{1\le i_1<\cdots < i_k \le r} (-1)^k (x+\omega_{i_1}+\cdots+\omega_{i_k})^{-w},
\\
&\Gamma_{-r}(w,\uo)
= \exp\left(\left. \frac{\partial}{\partial w} \zeta_{-r}(w;x,\uo) \right|_{w=0} \right),
\intertext{and}
&S_{-r}(x, \uo) 
=\Gamma_{-r}(x,\uo)^{-1}\\
&\quad\qquad\times \Gamma_{-r}(-(\omega_1+\cdots+\omega_r)-x, \uo)^{(-1)^r}.
\end{align*}
Then we have (see \cite{KO} for more generalizations also)
\begin{align*}
&\zeta_{-r}(w;x,\uo) \\
&\quad = \frac{1}{\Gamma(w)} \int_0^w 
(1-e^{-t \omega_1})\cdots(1-e^{-t \omega_r}) e^{-xt} t^{w-1} dt, \\
&\Gamma_{-r}(x,\uo) 
= \prod_{1\le i_1<\cdots < i_k \le r} (x+\omega_{i_1}+\cdots+\omega_{i_k})^{(-1)^k},
\\
\intertext{and}
&S_{-r}(x,\uo) =1.
\end{align*}
For example, we get
\[
\zeta_{\mathbb{SL}(2)}(s) = \Gamma_{-1}(s-3,2) = \frac{s-1}{s-3}.
\]
More generally:
\begin{align*}
\zeta_{\mathbb{SL}(r)}(s)  = \Gamma_{-(r-1)}(s-(r^2-1), (2,3,\cdots,r)) \\
\intertext{and}
\zeta_{\mathbb{GL}(r)}(s)  = \Gamma_{-r}(s-r^2, (1,2,3,\cdots,r)),
\end{align*}
where
$
\left\{
\begin{array}{l}
r-1 = \rank \mathbb{SL}(r)\\
r^2-1 = \dim \mathbb{SL}(r)
\end{array}
\right.$
and
$
\left\{
\begin{array}{l}
r = \rank \mathbb{GL}(r)\\
r^2 = \dim \mathbb{GL}(r).
\end{array}
\right.$
We obtain the functional equations
\begin{align*}
& \zeta_{\mathbb{SL}(r)}(s) = \zeta_{\SLr}(r(3r-1)/2-1-s)^{(-1)^{r-1}}, \\
\intertext{and}
& \zeta_{\mathbb{GL}(r)}(s) = \zeta_{\GLr}(r(3r-1)/2-s)^{(-1)^{r}}
\end{align*}
from the triviality of the multiple sine function of negative order exactly similar to Theorem~C.

\begin{Thm}
Let $r$ be a negative real number.
Then:
\begin{itemize}
\item[(1)]
$\zeta_r(m;x) =0$
for each integer $m$ satisfying $r<m\le 0$.
\item[(2)]
$\displaystyle\Gamma_r(x)
= \exp\left(
\int_1^\infty (1-u^{-1})^{-r} u^{-x-1} (\log u)^{-1} du\right)$
for $\Real(x)>0$.
\end{itemize}
\end{Thm}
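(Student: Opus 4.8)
The plan is to work directly with the $u$-integral representation and to isolate the role of the prefactor $1/\Gamma(w)$. Set
\[
M(w) = \int_1^\infty (1-u^{-1})^{-r} u^{-x-1} (\log u)^{w-1}\, du,
\]
so that $\zeta_r(w;x) = M(w)/\Gamma(w)$. The first and central step is to pin down where $M(w)$ converges as an honest integral. Near $u=1$ we have $(1-u^{-1})^{-r} \sim (\log u)^{-r}$, so the integrand behaves like $(\log u)^{w-r-1}$; since $r<0$ this is integrable precisely when $\Real(w) > r$. Near $u=\infty$ the factor $(1-u^{-1})^{-r}\to 1$ and $u^{-x-1}$ forces convergence once $\Real(x)>0$. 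Thus $M(w)$ is holomorphic in the half-plane $\Real(w)>r$, which in particular contains $w=0$ and every integer $m$ with $r<m\le 0$. This is the point at which the hypothesis $r<0$ does all the work: for $r>0$ the same integral diverges at $u=1$ for $\Real(w)$ near $0$ and one must subtract Taylor coefficients, whereas here no regularization is needed.

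Granting this, part (1) is immediate. For an integer $m$ with $r<m\le 0$ the point $w=m$ lies in the region of convergence, so $M(m)$ is finite, while $1/\Gamma(m)=0$ because $\Gamma$ has a pole at every non-positive integer. Hence $\zeta_r(m;x) = M(m)/\Gamma(m) = 0$.

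For part (2) I would use the Laurent expansion $1/\Gamma(w) = w + \gamma w^2 + O(w^3)$ at $w=0$. Since $M(w)$ is holomorphic near $w=0$, writing $M(w)=M(0)+M'(0)w+\cdots$ gives
\[
\zeta_r(w;x) = \frac{M(w)}{\Gamma(w)} = M(0)\,w + O(w^2),
\]
so the coefficient of $w$, namely $\left.\frac{\partial}{\partial w}\zeta_r(w;x)\right|_{w=0}$, equals $M(0)$. Therefore
\[
\Gamma_r(x) = \exp\!\left(\left.\frac{\partial}{\partial w}\zeta_r(w;x)\right|_{w=0}\right) = \exp\bigl(M(0)\bigr),
\]
and $M(0) = \int_1^\infty (1-u^{-1})^{-r} u^{-x-1}(\log u)^{-1}\,du$ is exactly the stated integral; its convergence at $u=1$ (where the integrand is $\sim (\log u)^{-r-1}$) again relies on $r<0$.

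The main obstacle is really the first step: making rigorous that $M(w)$ converges and is holomorphic up to and including $w=0$ (and at the integers $m$ in question) without any subtraction of Taylor terms. Once the local behaviour of $(1-u^{-1})^{-r}$ at $u=1$ is compared carefully against the $(\log u)^{w-1}$ factor, both parts follow formally from properties of $1/\Gamma$: it vanishes at every non-positive integer, giving part (1), and it has a simple zero at $w=0$ with derivative $1$ there, giving part (2). I would take care to record that (2) holds for $\Real(x)>0$ and then extends to the analytic continuation in $x$.
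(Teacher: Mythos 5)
Your proof is correct and is essentially the paper's own argument: both rest on the representation $\zeta_r(w;x)=M(w)/\Gamma(w)$ with $M(w)=\int_1^\infty (1-u^{-1})^{-r}u^{-x-1}(\log u)^{w-1}\,du$ convergent on a half-plane containing $w=0$ and the integers $m$ with $r<m\le 0$, together with the zeros of $1/\Gamma$ at non-positive integers for (1) and the simple zero of $1/\Gamma$ at $w=0$ for (2). Your convergence bound $\Real(w)>r$ is the right one (the paper's printed condition $\Real(w)>-r$ is evidently a typo, since otherwise $w=0$ would lie outside the region), and your Laurent expansion $1/\Gamma(w)=w+\gamma w^2+O(w^3)$ merely makes explicit what the paper compresses into ``(2) is seen by looking at $w=0$.''
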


\begin{Ex}
\begin{equation*}
\zeta_{-3}(w;x) = x^{-w} -3(x+1)^{-w} + 3 (x+2)^{-w} -(x+3)^{-w}
\end{equation*}
and
\begin{equation*}
\zeta_{-3}(0;x) = \zeta_{-3}(-1;x) = \zeta_{-3}(-2;x)=0.
\end{equation*}
Notice that $\zeta_{-3}(-3;x) = -6$.
(In general
$\zeta_{-m}(-m;x) = (-1)^m m!$ for integers $m\ge 0$.
\end{Ex}

\begin{Ex}
\begin{equation*}
\zeta_{-\frac12}(w;x) = x^{-w} - \sum_{n=1}^\infty 
\frac{{2n \choose n}}{(2n-1) 4^n} (n+x)^{-w}
\end{equation*}
and 
\begin{equation*}
\zeta_{-\frac12}(0;x) = 1 - \sum_{n=1}^\infty 
\frac{{2n \choose n}}{(2n-1) 4^n} =0,
\end{equation*}
that is
\begin{equation*}
\sum_{n=1}^\infty 
\frac{{2n \choose n}}{(2n-1) 4^n} =1.
\end{equation*}
\end{Ex}
\begin{proof}
The fact (1) follows from the integral representation
\begin{equation*}
\zeta_r(w;x) = \frac{1}{\Gamma(w)} \int_1^\infty
(1-u^{-1})^{-r} u^{-x-1} (\log u)^{w-1} du,
\end{equation*}
since this integral converges for $\Real(w)>-r$
when $\Real(x)>0$,
and $1/\Gamma(w)$ has zeros at
$w=0,-1,\ldots, r+1$. 
Similarly, (2) is seen by looking at $w=0$.
\end{proof}

\section{Proof of Theorem~A}
For a function $N: (1,\infty) \rightarrow \C$ we defined
\begin{equation*}
Z_N(w;s) = \frac{1}{\Gamma(w)} \int_1^\infty N(u) u^{-s-1}(\log u)^{w-1} du
\end{equation*}
and 
\begin{equation*}
\zeta_N(s) = \exp\left(\left.\frac{\partial}{\partial w} Z_N(w;s) \right\vert_{w=0} \right).
\end{equation*}
We calculate these functions in the case of a finite sum
\begin{equation*}
N(u) = \sum_{\alpha} m(\alpha) u^\alpha.
\end{equation*}
It is sufficient to calculate the following monomial case.
\begin{Lem}
Let $N(u)=u^\alpha$, then
\begin{equation*}
Z_N(w;s) = (s-\alpha)^{-w}
\end{equation*}
and
\begin{equation*}
\zeta_N(s) = \frac{1}{s-\alpha}.
\end{equation*}
\end{Lem}
\begin{proof}
\begin{align*}
Z_N(w;s) 
&= \frac{1}{\Gamma(w)} \int_1^\infty u^{\alpha-s-1} (\log u) ^{w-1} du\\
&= \frac{1}{\Gamma(w)} \int_0^\infty e^{-(s-\alpha)t} t^{w-1} dt\\
&= (s-\alpha)^{-w}.
\end{align*}
Hence
\begin{equation*}
\left.\frac{\partial}{\partial w} Z_N(w,s) \right\vert_{w=0}
= - \log(s-\alpha)
\end{equation*}
and
\begin{equation*}
\zeta_N(s) = \frac1{s-\alpha}.
\end{equation*}
\end{proof}

\section{Proof of Theorem~B}
(1) Since 
\begin{align*}
&Z_{N_1\oplus N_2}(w;s) \\
&= \frac{1}{\Gamma(w)} \int_1^\infty (N_1\oplus N_2)(u) u^{-s-1} (\log u)^{w-1} du\\
&= \frac{1}{\Gamma(w)} \int_1^\infty (N_1(u) +  N_2(u)) u^{-s-1} (\log u)^{w-1} du\\
&= Z_{N_1}(w;s) + Z_{N_2}(w;s),
\end{align*}
we have
\[
\zeta_{N_1\oplus N_2}(s) = \zeta_{N_1}(s) \zeta_{N_2}(s).
\]
(2) From
\begin{align*}
(N_1 \oplus N_2)(u) 
&= N_1(u) N_2(u) \\
&= (\sum_{\alpha_1} m_1(\alpha_1) u^{\alpha_1} )
(\sum_{\alpha_2} m_2(\alpha_2) u^{\alpha_2} ) \\
&= \sum_{\alpha_1, \alpha_2} m_1(\alpha_1) m_2(\alpha_2) u^{\alpha_1+\alpha_2},
\end{align*}
we have
\begin{align*}
&Z_{N_1\otimes N_2}(w;s) \\
&= \frac{1}{\Gamma(w)} \int_1^\infty (N_1\otimes N_2)(u) u^{-s-1} (\log u)^{w-1} du\\
&= \frac{1}{\Gamma(w)} \int_1^\infty 
\Big(\sum_{\alpha_1, \alpha_2} m_1(\alpha_1) m_2(\alpha_2) u^{\alpha_1+\alpha_2}\Big) \\
& \hskip4cm \times u^{-s-1} (\log u)^{w-1} du\\
&= \sum_{\alpha_1, \alpha_2} m_1(\alpha_1) m_2(\alpha_2) (s-(\alpha_1+\alpha_2))^{-w}.
\end{align*}
Hence
\[
\zeta_{N_1\otimes N_2}(s) 
= \prod_{\alpha_1,\alpha_2}(s-(\alpha_1+\alpha_2))^{-m_1(\alpha_1) m_2(\alpha_2)}.
\qed
\]

\section{Absolute zeta functions}
\begin{Thm}
Let $r$ be a positive integer.
Then
\begin{itemize}
\item[(1)]
$\displaystyle \zeta_{\Gmr}(s) = \Gamma_{-r}(s-r)$.
\item[(2)]
$\displaystyle \zeta_{\Gmr}(s) 
= \exp\Big( \int_1^\infty N_{\Gmr}(u) u^{-s-1} (\log u)^{-1} du \Big).$
\end{itemize}
\end{Thm}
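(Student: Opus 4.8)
The plan is to reduce both statements to results already in hand, namely Theorems~A and B together with the explicit formula for $\Gamma_{-r}$ from the first theorem of Section~2. First I would pin down the counting function. Since $N_{\Gm}(u)=u-1$, and this is a finite sum in $u$, iterating the tensor-product formula of Theorem~B(2) keeps us in the finite-sum case at each step and yields
\[
N_{\Gmr}(u) = (u-1)^r = \sum_{n=0}^{r} (-1)^n \binom{r}{n} u^{\,r-n},
\]
to which Theorem~A applies directly.

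For part~(1) I would feed this finite sum into Theorem~A(2). Reading off the exponents as $\alpha = r-n$ with multiplicities $m(r-n) = (-1)^n\binom{r}{n}$, and using $-m(\alpha) = (-1)^{n+1}\binom{r}{n}$ together with $s-(r-n) = (s-r)+n$, Theorem~A(2) gives
\[
\zeta_{\Gmr}(s) = \prod_{n=0}^{r} \bigl((s-r)+n\bigr)^{(-1)^{n+1}\binom{r}{n}}.
\]
On the other side, the first theorem of Section~2 applied with negative index $-r$ reads $\Gamma_{-r}(x) = \prod_{n=0}^{r}(x+n)^{(-1)^{n+1}\binom{r}{n}}$, and setting $x=s-r$ produces exactly the same product. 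Hence $\zeta_{\Gmr}(s) = \Gamma_{-r}(s-r)$. This step is a pure matching of two explicit finite products and involves no analysis.

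For part~(2) I would go back to the definition $\zeta_{\Gmr}(s) = \exp\!\bigl(\partial_w Z_{\Gmr}(w;s)|_{w=0}\bigr)$ and show the $w$-derivative at $w=0$ is obtained simply by setting $w=0$ inside the integral. Writing $Z_{\Gmr}(w;s) = \frac{1}{\Gamma(w)}I(w;s)$ with $I(w;s) = \int_1^\infty N_{\Gmr}(u)\,u^{-s-1}(\log u)^{w-1}\,du$, the decisive point is that $N_{\Gmr}(u)=(u-1)^r$ vanishes to order $r$ at $u=1$, so at $w=0$ the integrand behaves like $(u-1)^{r-1}$ near $u=1$ and is integrable there, while for $\Real(s)>r$ the tail at $u=\infty$ converges as well; thus $I(0;s)$ is finite and $I(w;s)$ is holomorphic near $w=0$. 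Since $1/\Gamma(w)=w+O(w^2)$, so that $1/\Gamma(0)=0$ and $\frac{d}{dw}(1/\Gamma(w))|_{w=0}=1$, the product rule collapses to
\[
\partial_w Z_{\Gmr}(w;s)\big|_{w=0} = I(0;s) = \int_1^\infty N_{\Gmr}(u)\,u^{-s-1}(\log u)^{-1}\,du,
\]
and exponentiating gives part~(2) for $\Real(s)>r$, extended to all $s$ by analytic continuation.

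The main obstacle is exactly this analytic justification in part~(2): one must verify that $I(w;s)$ extends holomorphically across $w=0$ with a finite value $I(0;s)$, and it is here that the order-$r$ vanishing of the counting function — the canonical normalization emphasized throughout — is indispensable. Rather than reprove the convergence near $u=1$, I would invoke part~(2) of the second theorem of Section~2: applied with index $-r$ it gives $\Gamma_{-r}(x) = \exp\!\bigl(\int_1^\infty (1-u^{-1})^{r} u^{-x-1}(\log u)^{-1}\,du\bigr)$ for $\Real(x)>0$, and the substitution $(1-u^{-1})^r = (u-1)^r u^{-r}$ with $x=s-r$ turns the integrand into $N_{\Gmr}(u)\,u^{-s-1}(\log u)^{-1}$; combined with part~(1) this reproves part~(2) and confirms the consistency of the two derivations.
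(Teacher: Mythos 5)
Your proposal is correct, but your part (1) takes a genuinely different route from the paper's. The paper works at the level of the absolute Hurwitz zeta function: rewriting $(u-1)^r = (1-u^{-1})^r u^r$ inside the integral gives the identity $Z_{\Gmr}(w;s) = \zeta_{-r}(w;s-r)$ as functions of $w$, and (1) then follows immediately by differentiating at $w=0$, since $\Gamma_{-r}$ is \emph{defined} that way. That identification is exactly Theorem~C(1); it transfers all properties of $\zeta_{-r}$ (e.g.\ the vanishing at $w=0,-1,\dots,-r+1$ of Theorem~2(1)) to $Z_{\Gmr}$ wholesale, and it generalizes verbatim to the multi-period cases $\SLr$ and $\GLr$ treated in Section~2. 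You instead expand $(u-1)^r$ binomially, invoke Theorem~A(2), and match the resulting finite product against the explicit product formula for $\Gamma_{-r}$ in Theorem~1; your index bookkeeping ($\alpha = r-n$, $-m(\alpha)=(-1)^{n+1}\binom{r}{n}$, $x=s-r$) checks out, and this route has the merit of producing the explicit product of Theorem~C(2) as a byproduct. Its cost is that it compares the two sides only after the $w$-derivative at $0$, so by itself it does not yield the $Z$-level identity of Theorem~C(1) — though Theorem~A(1) together with $\zeta_{-r}(w;x)=\sum_{n=0}^{r}(-1)^n\binom{r}{n}(n+x)^{-w}$ would close that gap in the same spirit. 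For part (2), your final derivation (Theorem~2(2) with index $-r$, then $(1-u^{-1})^r u^{-x-1} = (u-1)^r u^{-s-1}$ at $x=s-r$, combined with (1)) is precisely the paper's proof, and your preliminary direct argument via $1/\Gamma(w)=w+O(w^2)$ is in fact the mechanism underlying the paper's proof of Theorem~2(2). One small correction: drop ``extended to all $s$ by analytic continuation'' — the integral in (2) diverges for $\Real(s)\le r$ (this is the hypothesis $\Real(x)>0$ in Theorem~2(2)), so identity (2) should be asserted only on the half-plane $\Real(s)>r$, where both sides are literally defined.
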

\begin{proof}
(1) Since
\begin{equation*}
N_{\Gmr}(u) = (u-1)^r,
\end{equation*}
we have
\begin{align*}
&Z_{\Gmr}(w;s)\\
&= \frac1{\Gamma(w)} \int_1^\infty (u-1)^r u^{-s-1}(\log u)^{w-1} du \\
&= \frac1{\Gamma(w)} \int_1^\infty (1-u^{-1})^r u^{-s+r-1} (\log u)^{w-1} du \\
&= \zeta_{-r}(w;s-r).
\end{align*}
Thus, 
\begin{equation*}
\zeta_{\Gmr}(s) = \Gamma_{-r}(s-r).
\end{equation*}
(2) This follows from (1) and Theorem~2(2).
\end{proof}

We notice that Theorem~1 and Theorem~3(1) imply Theorem~C(1)(2).

\section{Functional equations}

\begin{Thm}
Let $r$ be a positive integer.
Then
\begin{equation*}
\frac{\zeta_{\Gmr}(s)}{\zeta_{\Gmr}(r-s)^{(-1)^r}} = S_{-r}(s-r)^{-1}.
\end{equation*}
\end{Thm}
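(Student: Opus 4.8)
The plan is to compute $S_{-r}(s-r)$ directly from the definition of the multiple sine function and then convert the resulting gamma values into absolute zeta values by means of Theorem~3(1).

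First I would write out the defining relation $S_r(x) = \Gamma_r(x)^{-1}\Gamma_r(r-x)^{(-1)^r}$ with the index $r$ replaced by $-r$, obtaining
\begin{equation*}
S_{-r}(x) = \Gamma_{-r}(x)^{-1}\,\Gamma_{-r}(-r-x)^{(-1)^{-r}}.
\end{equation*}
Since $r$ is an integer we have $(-1)^{-r}=(-1)^r$, so the exponent of the second factor is simply $(-1)^r$. Evaluating at $x=s-r$ and using $-r-(s-r)=-s$ then gives
\begin{equation*}
S_{-r}(s-r) = \Gamma_{-r}(s-r)^{-1}\,\Gamma_{-r}(-s)^{(-1)^r}.
\end{equation*}

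Next I would translate the two gamma values back into absolute zeta functions. Theorem~3(1) reads $\zeta_{\Gmr}(s)=\Gamma_{-r}(s-r)$, which identifies the first factor immediately; applying the same identity at the shifted point $r-s$ gives $\zeta_{\Gmr}(r-s)=\Gamma_{-r}((r-s)-r)=\Gamma_{-r}(-s)$, which identifies the second factor. Substituting these yields
\begin{equation*}
S_{-r}(s-r) = \zeta_{\Gmr}(s)^{-1}\,\zeta_{\Gmr}(r-s)^{(-1)^r},
\end{equation*}
and taking reciprocals of both sides produces exactly the asserted identity.

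Because the argument is essentially a substitution, I do not expect a genuine obstacle; the only point demanding care is the exponent and argument bookkeeping, namely the identities $(-1)^{-r}=(-1)^r$ and $-r-(s-r)=-s$, together with the correct placement of the $(-1)^r$ power on the $\zeta_{\Gmr}(r-s)$ factor. As a consistency check, combining this identity with Theorem~1(2), which asserts $S_{-r}(x)=1$, recovers the functional equation $\zeta_{\Gmr}(s)=\zeta_{\Gmr}(r-s)^{(-1)^r}$ of Theorem~C(3).
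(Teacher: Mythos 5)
Your proposal is correct and follows essentially the same route as the paper's proof: both are a direct substitution using Theorem~3(1), namely $\zeta_{\Gmr}(s)=\Gamma_{-r}(s-r)$ and $\zeta_{\Gmr}(r-s)=\Gamma_{-r}(-s)$, combined with the definition $S_{-r}(x)=\Gamma_{-r}(x)^{-1}\Gamma_{-r}(-r-x)^{(-1)^r}$ evaluated at $x=s-r$. The only cosmetic difference is that you start from the sine side and take reciprocals at the end, while the paper starts from the product $\zeta_{\Gmr}(s)\,\zeta_{\Gmr}(r-s)^{(-1)^{r+1}}$; the bookkeeping is identical.
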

\begin{proof}
From Theorem~3(1), we have
\begin{equation*}
\zeta_{\Gmr}(s) = \Gamma_{-r}(s-r)
\end{equation*}
and 
\begin{equation*}
\zeta_{\Gmr}(r-s) = \Gamma_{-r}(-s).
\end{equation*}
Hence,
\begin{align*}
&\zeta_{\Gmr}(s) \zeta_{\Gmr}(r-s)^{(-1)^{r+1}}\\
&= \Gamma_{-r}(s-r) \Gamma_{-r}(-s)^{(-1)^{r+1}}\\
&= S_{-r}(s-r)^{-1}.
\end{align*}
\end{proof}

We remark that we have the functional equation
\begin{equation*}
\zeta_{\Gmr}(s) = \zeta_{\Gmr}(r-s)^{(-1)^{r}}
\end{equation*}
from Theorem~1(2) and we know that it is
equivalent to $S_{-r}(x)=1$.
Thus we have Theorem~C(3).

\end{document}